\newcommand{\Hom}{\operatorname{Hom}\nolimits}
\newcommand{\End}{\operatorname{End}\nolimits}
\renewcommand{\mod}{\operatorname{mod}\nolimits}
\newcommand{\stmod}{\operatorname{\underline{mod}}\nolimits}
\newcommand{\gldim}{\operatorname{gldim}\nolimits}
\newcommand{\repdim}{\operatorname{repdim}\nolimits}
\newcommand{\Ext}{\operatorname{Ext}\nolimits}
\newcommand{\HH}{\operatorname{HH}\nolimits}
\newcommand{\ra}{\operatorname{\mathfrak{r}}\nolimits}
\newcommand{\La}{\Lambda}
\newcommand{\op}{\operatorname{op}\nolimits}
\newcommand{\e}{\operatorname{e}\nolimits}
\newcommand{\Lae}{\operatorname{\Lambda^{\e}}\nolimits}
\newcommand{\add}{\operatorname{add}\nolimits}
\newcommand{\Aut}{\operatorname{Aut}\nolimits}
\newtheorem{theorem}{Theorem}[section]
\newtheorem{corollary}[theorem]{Corollary}
\newtheorem{lemma}[theorem]{Lemma}
\theoremstyle{definition}
\theoremstyle{definition}
\theoremstyle{definition}
\theoremstyle{definition}
\newtheorem*{example}{Example}
\theoremstyle{definition}
\theoremstyle{definition}
\theoremstyle{remark}
\theoremstyle{remark}
\theoremstyle{definition}
\theoremstyle{definition}
\begin{document}
\title[Skew group rings, wreath products and blocks of Hecke algebras]{On the representation dimension of skew group algebras, wreath products and blocks of Hecke algebras}
\author{Petter Andreas Bergh \& Will Turner}
\address{Petter Andreas Bergh \newline Institutt for matematiske fag \\
  NTNU \\ N-7491 Trondheim \\ Norway}
\email{bergh@math.ntnu.no}
\address{Will Turner \newline Department of Mathematics \\ University of Aberdeen \\ Fraser Noble Building \\ King's College \\ Aberdeen AB24 3UE \\ United Kingdom}
\email{w.turner@abdn.ac.uk}


\subjclass[2000]{16G60, 20C08, 20C30}

\keywords{Representation dimension, skew group algebras, wreath products, Hecke algebra}

\thanks{This paper grew out of a visit made by the first author to the Department of Mathematics, University of Aberdeen, in December 2010. He would like to thank the algebra group of the department for the invitation and the hospitality.}

\begin{abstract}
We establish bounds for the representation dimension of skew group algebras and wreath products. Using this, we obtain bounds for the representation dimension of a block of a Hecke algebra of type $A$, in terms of the weight of the block. This includes certain blocks of group algebras of symmetric groups.
\end{abstract}

\maketitle

\section{Introduction}\label{secintro}

In \cite{Auslander1}, Auslander introduced the representation dimension of a finite dimensional algebra. An algebra is semisimple precisely when its representation dimension is zero, and no algebra has representation dimension one. A non-semisimple algebra is of finite representation type precisely when its representation dimension is two, and of infinite type when the representation dimension is at least three. The motivation for introducing this invariant was to measure how far an algebra is from having finite representation type. However, the full meaning and the properties of this invariant are far from understood, except in the very lowest dimensions. For example, a still unproved conjecture states that the representation dimension of an algebra of tame representation type is at most three. 

Thirtytwo years after the introduction of this invariant, Iyama showed in \cite{Iyama} that it is always finite. Three years later, the first examples appeared of algebras having representation dimension greater than three. Namely, Rouquier showed in \cite{Rouquier1} that the representation dimension of the exterior algebra on a $d$-dimensional vector space is $d+1$. Other examples, giving lower bounds, followed in  
\cite{Bergh}, \cite{BerghOppermann1}, \cite{BerghOppermann2}, \cite{KrauseKussin}, \cite{Oppermann1}, \cite{Oppermann2}, \cite{Oppermann3}, \cite{OppermannMiemietz}, using Rouquier's notion of the dimension of a triangulated category (cf.\ \cite{Rouquier2}). However, the exact value of the representation dimension is known in only a few cases, and there does not even exist a method for computing an effective \emph{upper} bound. 

In \cite{BerghErdmann}, both an upper and a lower bound for the representation dimension of Hecke algebras were established. Let $\mathcal{H}_q(A_{n-1})$ be such an algebra of type $A_{n-1}$, where $q$ is a primitive $\ell^{th}$ root of unity and the ground field $k$ is of characteristic zero. If the algebra is not semisimple, then it was shown that
$$[n/ \ell ] +1 \le \repdim \mathcal{H}_q(A_{n-1}) \le 2 [n/ \ell ],$$
where $[n/ \ell]$ is the integer part of the rational number $n/ \ell$. The same lower bound was established for Hecke algebras of types $B$ and $D$, and the upper bound was shown to hold in certain cases.

In this paper, we improve this result by extending it to \emph{blocks} of Hecke algebras of type $A$. 
We show that if $\mathcal{B}$ is a block of $\mathcal{H}_q(A_{n-1})$ of weight $w$, then
$$w+1 \le \repdim \mathcal{B} \le 2w.$$
The weight of any block is at most $[n/ \ell]$, and the weight of the principal block equals $[n/ \ell]$. 
Therefore the bounds from \cite{BerghErdmann} follow easily from our result.
The same bounds also hold for blocks of $\mathcal{H}_q(A_{n-1})$ of weight $w$ over fields $k$ of characteristic $p>w$, 
in case $q \in \mathbb{F}_p \backslash \{0,1 \}$ is an $\ell^{th}$ root of unity; 
and they hold in case $q=1$ and $\ell =p$ where the Hecke algebra $\mathcal{H}_q(A_{n-1})$ 
is nothing but the group algebra $kS_n$ of the $n^{th}$ symmetric group. 
 
Our proof relies on results on so-called Rouquier blocks of $\mathcal{H}_q(A_{n-1})$, cf.\  \cite{ChuangKessar} and \cite{ChuangMiyachi}. 
Such a block of weight $w$ is Morita equivalent to the wreath product $A \wr S_w$, 
where $A$ is a Brauer tree algebra associated to the graph
$$\xymatrix{
\circ \ar@{-}[r] & \circ \ar@{-}[r] & \cdots \ar@{-}[r] & \circ \ar@{-}[r] & \circ }$$
with $\ell$ vertices and no exceptional multiplicity, and $S_w$ is the $w^{th}$ symmetric group. 
The representation dimension of the block then equals that of the algebra $A \wr S_w$. We therefore establish bounds on the representation dimension of wreath products, and of more general skew group algebras.

\section{Skew group algebras and wreath products}\label{sec skew}

In this section, we fix a field $k$. Let $\Lambda$ be a finite dimensional $k$-algebra,
and denote by $\mod \Lambda$ the category of finitely generated
left $\Lambda$-modules. The
\emph{representation dimension} of $\Lambda$, denoted $\repdim
\Lambda$, is defined as
$$\repdim \Lambda \stackrel{\text{def}}{=} \inf \{ \gldim
\End_{\Lambda}(M) \mid M \text{ generates and cogenerates } \mod
\Lambda \},$$ where $\gldim$ denotes the global dimension of an
algebra. To say that a module generates and cogenerates $\mod
\Lambda$ means that it contains all the indecomposable projective
and injective modules as direct summands. Of course, when $\La$ is selfinjective, a module is a generator if and only if it is a cogenerator.

We shall prove bounds for the representation dimension of blocks of Hecke algebras. As mentioned in the introduction, we do this by using a derived equivalence from \cite{ChuangMiyachi}, which enables us to pass to certain wreath products. Wreath products are special skew group algebras, and we therefore start with computing bounds for such algebras.

Suppose $G$ is a finite group acting on $\La$, i.e.\ there is a group homomorphism $G \to \Aut \La$, where $\Aut \La$ is the multiplicative group of algebra automorphisms on $\La$. The \emph{skew group algebra} $\La [G]$ is the finite dimensional algebra whose underlying vector space is $\La \otimes_k kG$, where $kG$ is the group algebra of $G$. The multiplication is defined by
$$( \lambda \otimes g )( \lambda' \otimes g' ) \stackrel{\text{def}}{=} \lambda g( \lambda' ) \otimes gg',$$
for $\lambda, \lambda' \in \La$ and $g,g' \in G$. Denote the identity in $G$ by $e$. The algebra homomorphism
\begin{eqnarray*}
\La & \to & \La [G] \\
\lambda & \mapsto & \lambda \otimes e
\end{eqnarray*}
is injective, and so we may view $\La$ as a subalgebra of $\La [G]$. Since every element in the skew group algebra can be written uniquely as a sum $\sum_{g \in G} \lambda_g \otimes g$, there is an isomorphism
$$\La [G] \simeq \bigoplus_{g \in G} {{_1\La}_g}$$
of $\La$-$\La$-bimodules. Here, the bimodule ${{_1\La}_g}$ is the vector space $\La$, with bimodule scalar action given by $\lambda_1 \cdot \lambda \cdot \lambda_2 = \lambda_1 \lambda  g( \lambda_2 )$. In particular, the skew group algebra is free as a left $\La$-module.

In the above bimodule isomorphism, the summand ${{_1\La}_e}$ is isomorphic (actually equal) to $\La$. Therefore $\La$ is isomorphic to a direct summand of $\La [G]$, as $\La$-$\La$-bimodules. Now let $X$ be $\La [G]$ considered as a $\La$-$\La [G]$-bimodule, and let $Y$ be $\La [G]$ considered as a $\La [G]$-$\La$-bimodule. From what we have just seen, the $\La$-$\La$-bimodule $\La$ is isomorphic to a direct summand of $X \otimes_{\La [G]} Y$. By \cite[Theorem 1.1]{ReitenRiedtmann}, if the order of $G$ does not divide the characteristic of the ground field $k$, then the ``opposite" also holds. Namely, in this situation, the $\La [G]$-$\La[G]$-bimodule $\La [G]$ is isomorphic to a direct summand of $Y \otimes_{\La} X$. In the terminology of \cite{Linckelmann}, the algebras $\La$ and $\La [G]$ are then \emph{separably equivalent}. Moreover, by \cite[Theorem 1.1 and Theorem 1.3(c)]{ReitenRiedtmann}, one of the algebras is selfinjective if and only if the other is. We record all this in the following lemma.

\begin{lemma}\label{SkewGroupAlg}
Let $\La$ be a finite dimensional algebra on which a finite group $G$ is acting, and suppose that the order of $G$ is invertible in the ground field. Then the following hold:
\begin{enumerate}
\item $\La$ and $\La[G]$ are separably equivalent.
\item $\La$ is selfinjective if and only if $\La[G]$ is.
\end{enumerate}
\end{lemma}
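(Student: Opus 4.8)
The plan is to read both parts of the lemma off the cited results in \cite{ReitenRiedtmann}, after verifying that the necessary bimodule-summand conditions hold; the hypothesis that $|G|$ is invertible in $k$ is exactly what makes \cite[Theorem~1.1]{ReitenRiedtmann} applicable. For part~(1), recall the definition of separable equivalence from \cite{Linckelmann}: two algebras $\La$ and $\Gamma$ are separably equivalent if there exist a $\La$-$\Gamma$-bimodule $X$ and a $\Gamma$-$\La$-bimodule $Y$, each finitely generated and projective on both sides, such that $\La$ is a direct summand of $X \otimes_{\Gamma} Y$ as a $\La$-$\La$-bimodule, and simultaneously $\Gamma$ is a direct summand of $Y \otimes_{\La} X$ as a $\Gamma$-$\Gamma$-bimodule. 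Here one takes $\Gamma = \La[G]$, with $X$ being $\La[G]$ viewed as a $\La$-$\La[G]$-bimodule and $Y$ being $\La[G]$ viewed as a $\La[G]$-$\La$-bimodule.

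So the two halves of the separable-equivalence condition must each be checked. The first half is unconditional and has essentially already been established in the paragraph preceding the lemma: using the $\La$-$\La$-bimodule decomposition $\La[G] \simeq \bigoplus_{g \in G} {{_1\La}_g}$, the identity summand ${{_1\La}_e}$ is literally $\La$, and one argues that this $\La$ sits as a direct summand of $X \otimes_{\La[G]} Y = \La[G] \otimes_{\La[G]} \La[G] \simeq \La[G]$ as $\La$-$\La$-bimodules. The second half — that $\La[G]$ is a direct summand of $Y \otimes_{\La} X$ as a $\La[G]$-$\La[G]$-bimodule — is precisely the content of \cite[Theorem~1.1]{ReitenRiedtmann} under the invertibility hypothesis, and this is where that hypothesis is genuinely used: one needs an averaging/trace argument over $G$, which requires dividing by $|G|$. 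I would also note in passing that $X$ and $Y$ are projective on each side, since $\La[G]$ is free as a left and as a right $\La$-module (and trivially projective over itself), so that the bimodules qualify for the definition of separable equivalence.

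For part~(2), the plan is to invoke \cite[Theorem~1.1 and Theorem~1.3(c)]{ReitenRiedtmann} directly: these results show that, under the same invertibility assumption, $\La$ is selfinjective if and only if $\La[G]$ is. One direction is perhaps more transparent — since $\La[G]$ is free, hence projective, as a left $\La$-module, standard descent arguments transport selfinjectivity — but rather than reproving this I would simply cite the stated theorems, as the excerpt already indicates that this is the intended source.

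The main obstacle is not any deep computation but rather the careful matching of conventions: I must make sure that the bimodule structures on $X$ and $Y$, the side on which the tensor products are taken, and the direction of the direct-summand relations all line up exactly with the hypotheses of \cite[Theorem~1.1]{ReitenRiedtmann} and with the definition of separable equivalence in \cite{Linckelmann}. Since the paragraph before the lemma has already spelled out these identifications and quoted the relevant theorems, the proof itself reduces to a short paragraph that assembles those observations into the two assertions; there is no serious technical difficulty remaining once the invertibility hypothesis is in force.
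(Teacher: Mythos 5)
Your proposal is correct and follows essentially the same route as the paper: the same bimodules $X$ and $Y$, the same decomposition $\La[G] \simeq \bigoplus_{g \in G} {{_1\La}_g}$ giving $\La$ as a summand of $X \otimes_{\La[G]} Y$, the citation of \cite[Theorem 1.1]{ReitenRiedtmann} for the other summand relation, and \cite[Theorem 1.1 and Theorem 1.3(c)]{ReitenRiedtmann} for selfinjectivity. Your added remark that $X$ and $Y$ are projective on both sides (since $\La[G]$ is free over $\La$) is a sensible extra check that the paper leaves implicit.
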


When $\La$ and $\La[G]$ are selfinjective, their stable module categories $\stmod \La$ and $\stmod \La[G]$ are triangulated categories. When these algebras are separably equivalent, the dimensions of these triangulated categories (in the sense of \cite{Rouquier2}), are equal by \cite[Corollary 3.7]{Linckelmann} (the latter is stated for symmetric algebras, but holds for separably equivalent selfinjective algebras). Combining this with Lemma \ref{SkewGroupAlg}, we obtain the following lemma.

\begin{lemma}\label{SkewGroupAlgDim}
Let $\La$ be a finite dimensional selfinjective algebra on which a finite group $G$ is acting, and suppose that the order of $G$ is invertible in the ground field. Then 
$$\dim \left ( \stmod \La \right ) = \dim \left ( \stmod \La [G] \right ).$$
\end{lemma}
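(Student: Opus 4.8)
The plan is to combine the two facts the excerpt has just assembled: by Lemma~\ref{SkewGroupAlg}, the hypotheses (selfinjectivity of $\La$ and invertibility of $|G|$) guarantee both that $\La$ and $\La[G]$ are separably equivalent and that $\La[G]$ is again selfinjective. Since both algebras are selfinjective, their stable module categories $\stmod \La$ and $\stmod \La[G]$ carry the structure of triangulated categories, so the quantity $\dim(\stmod -)$ is defined for each. The whole statement is then a matter of feeding these inputs into the invariance result \cite[Corollary 3.7]{Linckelmann}, which asserts that separably equivalent selfinjective algebras have stable categories of equal dimension.

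Concretely, I would proceed in three short steps. First, invoke Lemma~\ref{SkewGroupAlg}(2) to conclude that $\La[G]$ is selfinjective, so that $\stmod \La[G]$ is triangulated and the right-hand side of the desired equality makes sense. Second, invoke Lemma~\ref{SkewGroupAlg}(1) to record that $\La$ and $\La[G]$ are separably equivalent. Third, apply \cite[Corollary 3.7]{Linckelmann} to this separable equivalence of selfinjective algebras to obtain
$$\dim \left ( \stmod \La \right ) = \dim \left ( \stmod \La [G] \right ),$$
which is exactly the assertion.

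The only genuine subtlety, which the surrounding text has already flagged, is that \cite[Corollary 3.7]{Linckelmann} is stated for \emph{symmetric} algebras rather than for arbitrary selfinjective ones. The main obstacle is therefore not a new construction but a justification that the cited argument extends from the symmetric to the selfinjective setting. In a fully detailed write-up one would either check that Linckelmann's proof uses only the separable equivalence together with the triangulated structure on the stable category (both of which are available for selfinjective algebras), or cite the relevant generalization; the excerpt explicitly asserts that the result ``holds for separably equivalent selfinjective algebras,'' so for the purposes of this proof I would rely on that stated extension. With that in hand, the lemma follows immediately by the three-step chaining above, and no computation is required.
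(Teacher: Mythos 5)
Your proposal is correct and matches the paper's own argument exactly: the paper likewise combines Lemma~\ref{SkewGroupAlg} with \cite[Corollary 3.7]{Linckelmann}, noting (just as you do) that the latter is stated for symmetric algebras but extends to separably equivalent selfinjective ones. No further comment is needed.
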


We can now give a lower bound for the representation dimension of a skew group algebra.

\begin{theorem}\label{SkewGroupAlgLowerBound}
Let $\La$ be a finite dimensional selfinjective algebra on which a finite group $G$ is acting, and suppose that the order of $G$ is invertible in the ground field. Then 
$$\dim \left ( \stmod \La \right ) +2 \le \repdim \La [G].$$
\end{theorem}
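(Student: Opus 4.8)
The plan is to deduce the statement from a single general inequality for selfinjective algebras, after first arranging that $\La[G]$ is itself selfinjective. Since the order of $G$ is invertible in $k$ and $\La$ is selfinjective, Lemma~\ref{SkewGroupAlg}(2) guarantees that $\La[G]$ is selfinjective as well, so that $\stmod\La[G]$ is a triangulated category; and Lemma~\ref{SkewGroupAlgDim} identifies its dimension with that of $\stmod\La$. It therefore suffices to prove that for an arbitrary finite dimensional selfinjective algebra $\Ga$ one has $\dim(\stmod\Ga) + 2 \le \repdim\Ga$, and then to apply this with $\Ga = \La[G]$, reading off the chain $\dim(\stmod\La)+2 = \dim(\stmod\La[G])+2 \le \repdim\La[G]$.

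To establish the general inequality, I would start from a generator-cogenerator $M$ of $\mod\Ga$ realizing the representation dimension, that is, with $d := \gldim\End_{\Ga}(M) = \repdim\Ga$. By Auslander's characterization of the representation dimension \cite{Auslander1}, every module $X \in \mod\Ga$ then admits an exact sequence
$$0 \to M_{d-2} \to \cdots \to M_1 \to M_0 \to X \to 0$$
with each $M_i \in \add M$. The essential idea is to read this $\add M$-resolution inside the stable category: since $\Ga$ is selfinjective, $\mod\Ga$ is a Frobenius category, and each short exact sequence obtained by splicing the resolution induces a triangle in $\stmod\Ga$.

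Concretely, I would break the resolution into short exact sequences $0 \to K_{i+1} \to M_i \to K_i \to 0$ (with $K_0 = X$ and $K_{d-2} \cong M_{d-2}$), pass to the induced triangles $\underline{K_{i+1}} \to \underline{M_i} \to \underline{K_i} \to \Omega^{-1}\underline{K_{i+1}}$, and iterate. Each such triangle expresses $\underline{K_i}$ as built in one further extension step from shifts of $\underline{M}$, so that unwinding the $d-2$ triangles combines the $d-1$ terms of the resolution to give $\underline{X} \in \langle \underline{M}\rangle_{d-1}$ in Rouquier's notation, where $\langle\underline{M}\rangle_1$ is the closure of $\add\{\Omega^{j}\underline{M}\}$ and $\langle-\rangle_n$ is the $n$-fold extension closure. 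As $X$ was arbitrary, this exhibits $\stmod\Ga = \langle\underline{M}\rangle_{d-1}$, whence $\dim(\stmod\Ga) \le d-2$, as required.

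The passage from short exact sequences in the Frobenius category to triangles is standard, and the invocation of Auslander's resolution criterion is the structural heart of the argument; the step I expect to require the most care is the bookkeeping in the last paragraph. One must confirm that the single object $\underline{M}$ is a classical generator of $\stmod\Ga$ and track precisely how many extension steps the $d-1$ terms of the resolution contribute, so that the count delivers dimension $\le d-2$ rather than an off-by-one weaker bound; everything else is formal manipulation of the filtration by the extension-closure operation.
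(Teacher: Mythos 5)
Your argument is correct and follows the paper's proof exactly in structure: the paper deduces the theorem from Lemma~\ref{SkewGroupAlgDim} together with Rouquier's Proposition~3.7 of \cite{Rouquier1}, which is precisely the general inequality $\dim(\stmod\Ga)+2\le\repdim\Ga$ for selfinjective $\Ga$ that you isolate and then prove. Your derivation of that inequality (Auslander's $\add M$-resolution of length $d-2$, spliced into triangles in the Frobenius stable category to get $\underline{X}\in\langle\underline{M}\rangle_{d-1}$) is exactly Rouquier's own argument, so you have simply inlined the cited result rather than taken a different route.
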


\begin{proof}
This follows from Lemma \ref{SkewGroupAlgDim} and \cite[Proposition 3.7]{Rouquier1}.
\end{proof}

Obtaining an upper bound for the representation dimension of a skew group algebra is more complicated. The bound we give is in terms of the restriction of an induced module. To be precise, let $M$ be a left $\La$-module. Then the induced module $\La [G] \otimes_{\La} M$ is a finitely generated left $\La [G]$-module. What is the restriction of this module to $\La$? Recall that $\La [G]$ and $\oplus_{g \in G} {{_1\La}_g}$ are isomorphic as $\La$-$\La$-bimodules, hence the left $\La$-module $\La [G] \otimes_{\La} M$ is isomorphic to $\oplus_{g \in G} \left ( {{_1\La}_g} \otimes_{\La} M \right )$. Given any element $g \in G$, the left $\La$-modules ${{_1\La}_g} \otimes_{\La} M$ and ${_{g^{-1}}M}$ are isomorphic, where ${_{g^{-1}}M}$ is just $M$ with scalar multiplication $\lambda \cdot m = g^{-1}( \lambda )m$. Since the direct sum runs over all elements in $G$, we have shown that the restriction of $\La [G] \otimes_{\La} M$ to $\La$ is isomorphic to $\oplus_{g \in G} {{_g}M}$. The following result shows that when $M$ is a generator in $\mod \La$, and $\oplus_{g \in G} {{_g}M}$ belongs to $\add_{\La} M$, then the global dimension of the endomorphism ring of $M$ is an upper bound for $\repdim \La [G]$.

\begin{theorem}\label{SkewGroupAlgUpperBound}
Let $\La$ be a finite dimensional selfinjective algebra on which a finite group $G$ is acting, and suppose that the order of $G$ is invertible in the ground field. Furthermore, let $M$ be a generator in $\mod \La$, and suppose that $\oplus_{g \in G} {{_g}M}$ belongs to $\add_{\La} M$. Then
$$\repdim \La [G] \le \gldim \End_{\La} (M).$$
\end{theorem}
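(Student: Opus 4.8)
The plan is to exhibit a single generator-cogenerator $N$ of $\mod \La[G]$ whose endomorphism algebra satisfies $\gldim \End_{\La[G]}(N) \le \gldim \End_\La(M)$, and then to invoke Auslander's characterization of the representation dimension. The natural candidate is the induced module $N = \La[G] \otimes_\La M$. First I would check that $N$ is a generator-cogenerator: since $M$ is a generator, $\La$ is a summand of $M$, so $\La[G] = \La[G]\otimes_\La \La$ is a summand of $N$, whence $N$ is a generator; and because $\La[G]$ is selfinjective by Lemma~\ref{SkewGroupAlg}, a generator is automatically a cogenerator. Write $d = \gldim \End_\La(M)$ (we may assume $d \ge 2$, the semisimple case being trivial), and recall Auslander's theorem in the form: for a generator-cogenerator $N$ with $E = \End_{\La[G]}(N)$, one has $\gldim E \le d$ provided every $\La[G]$-module $Z$ satisfies $\Pd_E \Hom_{\La[G]}(N,Z) \le d-2$, equivalently admits a $\Hom_{\La[G]}(N,-)$-exact resolution $0 \to N_{d-2} \to \cdots \to N_0 \to Z \to 0$ with all $N_i \in \add_{\La[G]} N$.

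The strategy for producing these resolutions is to restrict to $\La$, resolve there, and induce back up. Write $F = \La[G]\otimes_\La -$ for induction and $R$ for restriction; both are exact since $\La[G]$ is free as a one-sided $\La$-module. Given $Z \in \mod \La[G]$, apply Auslander's theorem on the $\La$-side to the generator-cogenerator $M$: the module $R(Z)$ admits a $\Hom_\La(M,-)$-exact resolution $\mathcal{E}\colon 0 \to M_{d-2} \to \cdots \to M_0 \to R(Z) \to 0$ with $M_i \in \add_\La M$. Applying the exact additive functor $F$ gives an exact sequence $0 \to F(M_{d-2}) \to \cdots \to F(M_0) \to FR(Z) \to 0$ in which each $F(M_i) \in \add_{\La[G]} N$.

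Two points then remain, and these are the crux. First, I must verify that the induced sequence stays exact under $\Hom_{\La[G]}(N,-)$. Here the adjunction $\Hom_{\La[G]}(N, F(-)) \cong \Hom_\La(M, RF(-))$, combined with the identification $RF(-) \cong \bigoplus_{g \in G} {}_g(-)$ recorded before Lemma~\ref{SkewGroupAlg}, reduces the question to the exactness of $\bigoplus_g \Hom_\La(M, {}_g\mathcal{E}) \cong \bigoplus_g \Hom_\La({}_{g^{-1}}M, \mathcal{E})$. This is exactly where the hypothesis enters: since $\oplus_g {}_gM \in \add_\La M$, each twist ${}_{g^{-1}}M$ lies in $\add_\La M$, so the $\Hom_\La(M,-)$-exactness of $\mathcal{E}$ upgrades to $\Hom_\La({}_{g^{-1}}M,-)$-exactness and the complex remains exact.

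Second, what we have resolved is $FR(Z)$, not $Z$ itself. This gap is closed by separable equivalence: as established before Lemma~\ref{SkewGroupAlg}, the $\La[G]$-$\La[G]$-bimodule $\La[G]$ is a direct summand of $Y \otimes_\La X$, and tensoring over $\La[G]$ with $Z$ shows that $Z$ is a direct summand of $FR(Z) = (Y\otimes_\La X)\otimes_{\La[G]} Z$. Consequently, passing through $E = \End_{\La[G]}(N)$, the $\Hom_{\La[G]}(N,-)$-exact $\add N$-resolution of $FR(Z)$ yields $\Pd_E \Hom_{\La[G]}(N, FR(Z)) \le d-2$, and since $\Hom_{\La[G]}(N,Z)$ is a direct summand of $\Hom_{\La[G]}(N, FR(Z))$ as an $E$-module, we obtain $\Pd_E \Hom_{\La[G]}(N,Z) \le d-2$ as well. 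As $Z$ was arbitrary, Auslander's criterion gives $\gldim E \le d$, whence $\repdim \La[G] \le \gldim \End_{\La[G]}(N) \le d = \gldim \End_\La(M)$. I expect the main obstacle to be precisely this pair of steps: preserving $\Hom_{\La[G]}(N,-)$-exactness under induction, which is what forces the hypothesis that $\oplus_g {}_gM \in \add_\La M$, and transferring the resolution from $FR(Z)$ down to the summand $Z$, which is what separable equivalence buys us.
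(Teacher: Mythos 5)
Your proof is correct and follows essentially the same route as the paper: both take the induced module $N=\La[G]\otimes_{\La}M$ as the candidate generator and use the hypothesis $\oplus_{g\in G}{{_g}M}\in\add_{\La}M$ precisely to control the restriction of $N$ to $\La$. The only difference is that the paper black-boxes the two key steps as citations of \cite[Theorem 2.3]{BerghErdmann} (the inequality $\gldim\End_{\La[G]}(N)\le\gldim\End_{\La}(M)$) and \cite[Proposition 2.4]{BerghErdmann} ($N$ is a generator), whereas you unpack them into a self-contained restrict--resolve--induce argument via Auslander's criterion together with the Maschke-type splitting $Z\mid FR(Z)$, which is exactly how those cited results are proved.
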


\begin{proof}
Consider the left $\La [G]$-module $\La [G] \otimes_{\La} M$. By viewing $\La [G]$ as a $\La [G]$-$\La$-bimodule, the vector space $\Hom_{\La [G]}( \La [G], \La [G] \otimes_{\La} M)$ becomes a left $\La$-module, and as such it is isomorphic to the restriction of $\La [G] \otimes_{\La} M$ to $\La$. We have seen that this $\La$-module is isomorphic to $\oplus_{g \in G} {{_g}M}$, hence it belongs to $\add_{\La} M$ by assumption. Consequently, the left $\La$-module $\Hom_{\La [G]}( \La [G], \La [G] \otimes_{\La} M)$ belongs to $\add_{\La} M$. By \cite[Theorem 2.3]{BerghErdmann}, the inequality
$$\gldim \End_{\La [G]}( \La [G] \otimes_{\La} M) \le \gldim \End_{\La}(M)$$
then holds. Now, since the $\La [G]$-module $\La [G] \otimes_{\La} M$ is a generator by \cite[Proposition 2.4]{BerghErdmann}, the representation dimension of $\La [G]$ is at most the global dimension of $\End_{\La [G]}( \La [G] \otimes_{\La} M)$.
\end{proof}

We give an example illustrating Theorem \ref{SkewGroupAlgLowerBound} and Theorem \ref{SkewGroupAlgUpperBound}.

\begin{example}
Suppose $\La$ is a selfinjective algebra of finite representation type on which a finite group $G$ is acting, and that the order of $G$ is invertible in the ground field. Let $M$ be the direct sum of a complete set of representatives of the isomorphism classes of the indecomposable $\La$-modules. Then $M$ is a generator, and 
$$\repdim \La = \gldim \End_{\La} (M) \le 2$$
provided $\La$ is not semisimple. If $g$ is any element in $G$, then ${{_g}M}$ is just isomorphic to $M$, since an algebra automorphism will just permute the indecomposable modules. Therefore Theorem \ref{SkewGroupAlgUpperBound} shows that the representation dimension of $\La [G]$ is also at most two, that is, the skew group algebra is also of finite representation type. This is contained in \cite[Theorem 1.3(a)]{ReitenRiedtmann}.
\end{example}

We now turn to a special class of skew group algebras. Let $n$ be a positive integer, and denote by $\La^{\otimes n}$ the tensor product of $n$ copies of our $k$-algebra $\La$. This is again a finite dimensional algebra, on which there is a natural action by the $n^{th}$ symmetric group $S_n$. Namely, for an element $\sigma \in S_n$ and $a_1 \otimes \cdots \otimes a_n \in \La^{\otimes n}$, the action is given by
$$\sigma ( a_1 \otimes \cdots \otimes a_n ) \stackrel{\text{def}}{=} a_{\sigma^{-1}(1)} \otimes \cdots \otimes a_{\sigma^{-1}(n)}.$$
The \emph{wreath product} $\Lambda \wr S_n$ is the skew group algebra $\La^{\otimes n} [S_n]$. In other words, it is the finite dimensional $k$-algebra whose underlying vector space is 
$$\La^{\otimes n} \otimes kS_n,$$ 
and with multiplication defined by
$$(a_1 \otimes \cdots \otimes a_n \otimes \sigma )(b_1 \otimes \cdots \otimes b_n \otimes \tau ) \stackrel{\text{def}}{=} a_1b_{\sigma^{-1}(1)} \otimes \cdots \otimes a_nb_{\sigma^{-1}(n)} \otimes \sigma \tau,$$
for $a_i,b_i \in \La$ and $\sigma, \tau \in S_n$. 

When $\La$ is selfinjective, then so is the tensor algebra $\La^{\otimes n}$. Using Theorem \ref{SkewGroupAlgLowerBound} and Theorem \ref{SkewGroupAlgUpperBound}, we obtain the following result, which gives lower and upper bounds on the representation dimension of wreath products. The proof of the upper bound requires the ground field to be perfect.

\begin{theorem}\label{WreathProduct}
Let $\La$ be a finite dimensional selfinjective non-semisimple algebra over a perfect field, 
and $n$ a positive integer with $n!$ invertible in the ground field. Then
$$\dim \left ( \stmod \La^{\otimes n} \right ) +2 \le \repdim \left ( \Lambda \wr S_n \right ) \le n \left ( \repdim \La \right ).$$
\end{theorem}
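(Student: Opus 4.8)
The plan is to prove the two inequalities independently, in each case specialising the general skew group algebra results of this section to the symmetric group $G = S_n$ acting on $\Gamma := \La^{\otimes n}$ by permutation of tensor factors.

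The lower bound requires no new work. Since $\La$ is selfinjective, so is the tensor algebra $\Gamma = \La^{\otimes n}$, and the order $n!$ of $S_n$ is invertible in the ground field by hypothesis. As $\La \wr S_n = \Gamma[S_n]$ by definition, Theorem \ref{SkewGroupAlgLowerBound} applied to $\Gamma$ and $S_n$ yields precisely $\dim(\stmod \La^{\otimes n}) + 2 \le \repdim(\La \wr S_n)$.

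For the upper bound, the strategy is to exhibit a single generator of $\mod \Gamma$ to which Theorem \ref{SkewGroupAlgUpperBound} applies and whose endomorphism ring we can control. First I would fix a generator $N$ of $\mod \La$ realising the representation dimension, that is, with $\gldim \End_\La(N) = \repdim \La$; such an $N$ exists because $\repdim \La$ is finite by \cite{Iyama}, and it is automatically a cogenerator as $\La$ is selfinjective. I then set $M := N^{\otimes n}$, viewed as a $\Gamma$-module. Two points must be verified. That $M$ is a generator follows from $\La \in \add_\La N$, which forces $\La^{\otimes n} \in \add_\Gamma M$ by compatibility of $\add$ with tensor products (a summand of $N^{\oplus s}$ tensors up to a summand of $M^{\oplus s^n}$). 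The crucial hypothesis of Theorem \ref{SkewGroupAlgUpperBound} is $\bigoplus_{\sigma \in S_n} {}_\sigma M \in \add_\Gamma M$, and here the key observation is that permuting tensor factors, $m_1 \otimes \cdots \otimes m_n \mapsto m_{\sigma(1)} \otimes \cdots \otimes m_{\sigma(n)}$, defines an isomorphism ${}_\sigma M \xrightarrow{\sim} M$ of $\Gamma$-modules, intertwining the $\sigma$-twisted action with the ordinary one. Hence ${}_\sigma M \cong M$ for every $\sigma \in S_n$, so $\bigoplus_{\sigma \in S_n} {}_\sigma M \cong M^{\oplus n!}$ lies in $\add_\Gamma M$ trivially, and Theorem \ref{SkewGroupAlgUpperBound} gives $\repdim(\La \wr S_n) \le \gldim \End_\Gamma(M)$.

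It then remains to compute $\gldim \End_\Gamma(M)$. Using the standard isomorphism $\Hom_{A \otimes B}(U \otimes V, U' \otimes V') \cong \Hom_A(U,U') \otimes_k \Hom_B(V,V')$ for finite dimensional modules, one obtains an isomorphism of $k$-algebras $\End_{\La^{\otimes n}}(N^{\otimes n}) \cong (\End_\La N)^{\otimes n}$. The main obstacle is the remaining identity $\gldim\bigl((\End_\La N)^{\otimes n}\bigr) = n \cdot \gldim(\End_\La N)$: this is exactly where perfectness of the ground field enters, through the additivity of global dimension on tensor products of finite dimensional algebras over a perfect field, namely $\gldim(A \otimes_k B) = \gldim A + \gldim B$. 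Granting this, $\gldim \End_\Gamma(M) = n \cdot \gldim \End_\La(N) = n \cdot \repdim \La$, and combining with the previous paragraph gives $\repdim(\La \wr S_n) \le n(\repdim \La)$, which completes the proof.
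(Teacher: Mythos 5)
Your proposal is correct and follows essentially the same route as the paper: lower bound directly from Theorem \ref{SkewGroupAlgLowerBound}, upper bound via Theorem \ref{SkewGroupAlgUpperBound} applied to $N^{\otimes n}$ for a generator $N$ realising $\repdim\La$, using the permutation isomorphism ${}_\sigma(N^{\otimes n})\cong N^{\otimes n}$ and the additivity of global dimension for tensor products of endomorphism rings over a perfect field (the paper cites Xi for this last step, and replaces $N$ by a sum $N^{t}$ containing $\La$ as an actual summand where you instead argue via closure of $\add$ under tensoring, which is a cosmetic difference).
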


\begin{proof}
The order of the symmetric group $S_n$ is $n!$, which by assumption is invertible in the ground field. The lower bound
$$\dim \left ( \stmod \La^{\otimes n} \right ) +2 \le \repdim \left ( \Lambda \wr S_n \right )$$
therefore follows from Theorem \ref{SkewGroupAlgLowerBound}.

Let $d$ be the representation dimension of $\La$. Then there exists a generator $M \in \mod \La$ with $\gldim \End_{\La} (M) =d$. Let $t$ be an integer with the property that $\La$ is a direct summand of $M^t$, where $M^t$ is the direct sum of $t$ copies of $M$, and denote this direct sum by $N$. Then $N$ is also a generator in $\mod \La$ realizing the representation dimension of $\La$, i.e.\ $\gldim \End_{\La} (N) =d$. The tensor product $N^{\otimes n}$ is now a generator in $\mod \La^{\otimes n}$, since $\La^{\otimes n}$ is a direct summand. By \cite[Corollary 3.3 and Lemma 3.4]{Xi}, the global dimension of its endomorphism ring is given by
$$\gldim \End_{\La^{\otimes n}} \left ( N^{\otimes n} \right ) = \gldim \left ( \End_{\La}(N) \right )^{\otimes n} = n \left ( \gldim \End_{\La} (N) \right ) =nd.$$
For any element $\sigma \in S_n$, the twisted $\La^{\otimes n}$-module ${_{\sigma}(N^{\otimes n})}$ is isomorphic to $N^{\otimes n}$ itself via the isomorphism
\begin{eqnarray*}
N^{\otimes n} & \to & {_{\sigma}(N^{\otimes n})} \\
x_1 \otimes \cdots \otimes x_n & \mapsto & x_{\sigma^{-1}(1)} \otimes \cdots \otimes x_{\sigma^{-1}(n)}.
\end{eqnarray*}
Therefore, the $\La^{\otimes n}$-module $\oplus_{\sigma \in S_n} {_{\sigma}(N^{\otimes n})}$ trivially belongs to $\add_{\La^{\otimes n}} N^{\otimes n}$.
Consequently, the inequality
$$\repdim \left ( \Lambda \wr S_n \right ) \le \gldim \End_{\La^{\otimes n}} \left ( N^{\otimes n} \right ) =nd$$
follows from Theorem \ref{SkewGroupAlgUpperBound}.
\end{proof}

\section{Blocks of Hecke algebras}\label{secblocks}

As in the previous section, in this section we fix a field $k$. Let $q \in
k^\times$. The corresponding Hecke algebra $\mathcal{H}_q(A_{n-1})$ of type $A_{n-1}$ is the $k$-algebra with generators $T_1, \dots,
T_{n-1}$ satisfying the relations
\begin{eqnarray*}
(T_i+1)(T_i-q)=0 & \text{for} & 1 \le i \le n-1 \\
T_iT_{i+1}T_i = T_{i+1}T_iT_{i+1} & \text{for} & 1 \le i \le n-2 \\
T_iT_j = T_jT_i & \text{for} & |i-j| \ge 2.
\end{eqnarray*}
If $q=1$, this is just the group algebra of the symmetric
group $S_n$, hence $\mathcal{H}_q(A_{n-1})$ is also referred to as the Hecke algebra of $S_n$. In any case it is a symmetric algebra.

Let us assume that $\mathcal{H}_q(A_{n-1})$ is not semisimple for all $n$.
Then either $q \in k \backslash \{0,1 \}$, and the multiplicative order of $q$ in $k^\times$ is a positive integer $\ell$, 
or $q= 1$ in which case $k$ has positive characteristic $\ell >0$.
Under these assumptions the representation type of $\mathcal{H}_q(A_{n-1})$ depends on the number $[n/ \ell ]$, 
where $[r]$ is the integer part of a rational number $r$ (cf.\ \cite{ErdmannNakano}). Namely, the algebra $\mathcal{H}_q(A_{n-1})$ is semisimple if and only if $[n/ \ell ]=0$, where we define $[n/ \ell ]$ to be zero when $\ell = \infty$. It is non-semisimple of finite representation type if and only if $[n/ \ell ]=1$, and of tame representation type if and only if $\ell =2$ and $n$ is either $4$ or $5$ (and then $[n/ \ell ]=2$). In all other cases, the algebra $\mathcal{H}_q(A_{n-1})$ is of wild representation type. In \cite{BerghErdmann} it was shown that when the characteristic of the ground field $k$ is zero, then
$$[n/ \ell ] +1 \le \repdim \mathcal{H}_q(A_{n-1}) \le 2[n/ \ell ]$$
whenever $\mathcal{H}_q(A_{n-1})$ is not semisimple (that is, when $[n/ \ell ]>0$). We will extend this to blocks, where the weight plays the role of the integer $[n/ \ell ]$. 

The representation theory of the blocks of $\mathcal{H}_q(A_{n-1})$ is intimately connected with the theory of partitions of $n$; for details, see \cite{DipperJames}. Let $\lambda = ( \lambda_1, \dots, \lambda_t )$ be a partition of $n$, i.e.\ $\lambda_i \ge \lambda_{i+1}$ and $| \lambda | = \lambda_1 + \cdots + \lambda_t =n$. When we remove as many rim $\ell$-hooks as possible from the Young diagram of $\lambda$, we obtain (the Young diagram of) the \emph{$\ell$-core} of $\lambda$. Now, for each partition $\lambda$ of $n$ there is a $q$-Specht module $S^{\lambda}$ for  $\mathcal{H}_q(A_{n-1})$, and two such Specht modules $S^{\lambda}$ and $S^{\mu}$ belong to the same block precisely when the $\ell$-cores of the partitions $\lambda$ and $\mu$ are equal. Let $\mathcal{B}$ be the block containing the q-Specht module $S^{\lambda}$. Dividing $n$ by $\ell$ gives
$$n = w \ell + | \gamma |,$$
where $\gamma$ is the $\ell$-core of $\lambda$. The integer $w$ is then the \emph{weight} of $\mathcal{B}$. The block is called a \emph{Rouquier block} if its $\ell$-core $\gamma$ has at least $w-1$ more beads on each runner than on the runner to its left in an abacus presentation (cf.\ \cite{ChuangKessar}).

It follows from the definition that the weight $w$ of a block of $\mathcal{H}_q(A_{n-1})$ satisfies $0 \le w \le [n/ \ell ]$. Moreover, for every $w \in \{ 0, \dots, [n/ \ell ] \}$ there is a block of $\mathcal{H}_q(A_{n-1})$ of weight $w$. For example, the principal block, that is, the block containing the trivial module $k$, has weight $[n/ \ell ]$. To see this, write $n = m \ell + a$, where $0 \le a \le \ell -1$ (hence $m = [n/ \ell ]$). Then the $\ell$-core of the principal block is the partition $(a)$, and so the weight of this block must be $m$.

We are now ready to prove the main result in this section. It provides lower and upper bounds on the representation dimension of a block of a Hecke algebra of type $A$, in terms of the weight of the block.

\begin{theorem}\label{HeckeBlock}
Let $n \ge 2$ be an integer, and $q \in k^\times$. Furthermore, let $\mathcal{B}$ be a block of the Hecke algebra $\mathcal{H}_q(A_{n-1})$ of nonzero weight $w$, and assume that one of the following holds:
\begin{enumerate}
\item $k$ is algebraically closed of characteristic zero, and $q \in k \backslash \{0,1 \}$ has multiplicative order $\ell$ in $k^\times$, 
\item $k$ is algebraically closed of characteristic $p>w$, and $q \in \mathbb{F}_p \backslash \{0,1 \}$ has multiplicative order $\ell$ in $\mathbb{F}_p^\times$,
\item	$k$ is perfect of characteristic $p= \ell >w$ and $q=1$.
\end{enumerate}
Then
$$w+1 \le \repdim \mathcal{B} \le 2w.$$
\end{theorem}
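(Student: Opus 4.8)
The plan is to reduce the entire statement to the single inequality chain of Theorem \ref{WreathProduct}, applied to the wreath product $A \wr S_w$. First I would invoke the block theory recalled in the introduction: by \cite{ChuangMiyachi} the block $\mathcal{B}$ is derived equivalent to the Rouquier block of the same weight $w$, and by \cite{ChuangKessar} that Rouquier block is Morita equivalent to $A \wr S_w$, where $A$ is the Brauer tree algebra of the line with $\ell$ vertices. Since representation dimension is a Morita invariant, and is carried across the Chuang--Miyachi derived equivalence (the blocks being symmetric, that equivalence restricts to a stable equivalence of Morita type, under which $\repdim$ is preserved), one obtains $\repdim \mathcal{B} = \repdim (A \wr S_w)$. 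It then remains to bound $\repdim(A \wr S_w)$, and here I must check that Theorem \ref{WreathProduct} applies: $A$ is symmetric, hence selfinjective, and non-semisimple since $\ell \ge 2$; $k$ is perfect; and $w!$ is invertible in $k$ precisely because in each of the cases (1)--(3) the characteristic is either $0$ or a prime $p > w$, so no prime factor of $w!$ vanishes in $k$. This last point is exactly what the hypotheses $p > w$ are for.

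For the upper bound I would apply the right-hand inequality of Theorem \ref{WreathProduct}, namely $\repdim(A \wr S_w) \le w \cdot \repdim A$. The algebra $A$ is a Brauer tree algebra, hence of finite representation type, and it is non-semisimple, so by Auslander's theorem \cite{Auslander1} its representation dimension is exactly $2$. This gives $\repdim \mathcal{B} = \repdim(A \wr S_w) \le 2w$ at once.

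For the lower bound I would use the left-hand inequality $\dim(\stmod A^{\otimes w}) + 2 \le \repdim(A \wr S_w)$, so that everything reduces to proving $\dim(\stmod A^{\otimes w}) \ge w - 1$. My approach is via support varieties. Every non-projective indecomposable module over the Brauer tree algebra $A$ is $\Omega$-periodic, so each simple $A$-module has complexity $1$ and the cohomological support variety of $A$ is one-dimensional. Passing to the tensor power over the perfect field $k$, the module $S^{\otimes w}$ (the outer tensor product of a simple $A$-module $S$ with itself $w$ times) satisfies $\Ext^{*}_{A^{\otimes w}}(S^{\otimes w}, S^{\otimes w}) \cong \Ext^{*}_{A}(S,S)^{\otimes w}$ by the Künneth formula, a ring of Krull dimension $w$; equivalently $S^{\otimes w}$ has complexity $w$. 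The finite-generation (Fg) hypotheses needed to run support variety theory hold for $A^{\otimes w}$, since they hold for $A$ and are stable under tensor products. Feeding a $w$-dimensional support variety into the lower bound for the dimension of a stable category (as in \cite{BerghOppermann1}, and as already used for full Hecke algebras in \cite{BerghErdmann}) yields $\dim(\stmod A^{\otimes w}) \ge w - 1$, whence $\repdim \mathcal{B} \ge (w-1) + 2 = w+1$.

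The main obstacle I anticipate is this last lower-bound computation: one must produce a genuinely $w$-dimensional family of commuting cohomology operators acting on $\stmod A^{\otimes w}$ and convert it into a lower bound for the Rouquier dimension, which is where the support variety machinery and its finite-generation prerequisites do the real work. A secondary, more conceptual point that must be handled with care is the transfer of representation dimension across the derived equivalence between $\mathcal{B}$ and the Rouquier block; since $\repdim$ is not a derived invariant in general, I would justify this through the induced stable equivalence of Morita type rather than the derived equivalence directly.
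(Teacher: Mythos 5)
Your proposal follows essentially the same route as the paper: reduce to the wreath product $A \wr S_w$ via the Chuang--Rouquier derived equivalence between blocks of equal weight (which you misattribute to Chuang--Miyachi; the latter supplies the Morita equivalence of the Rouquier block with the wreath product) together with the invariance of representation dimension under the induced stable equivalence of Morita type, then apply Theorem \ref{WreathProduct} with $\repdim A = 2$ for the upper bound and a complexity-$w$ plus finite-generation argument for the lower bound on $\dim\left(\stmod A^{\otimes w}\right)$. The paper organises that last step through the Krull dimension of Hochschild cohomology and Linckelmann's periodicity results rather than a K\"unneth computation of $\Ext_A^*(S,S)^{\otimes w}$, but the substance --- exhibiting a complexity-$w$ module over $A^{\otimes w}$ and invoking Bergh's bound $\dim\left(\stmod A^{\otimes w}\right) \ge w-1$ --- is the same.
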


\begin{proof}
If $\mathcal{B}$ is not a Rouquier block, then choose an integer $m$ with the property that the Hecke algebra $\mathcal{H}_q(A_{m-1})$ 
contains a Rouquier block $\mathcal{B'}$ of weight $w$ (such an integer always exists). 
By \cite[Theorem 7.12]{ChuangRouquier}, the algebras $\mathcal{B}$ and $\mathcal{B'}$ are derived equivalent. 
Consequently the block $\mathcal{B}$ is derived equivalent to a Rouquier block $\mathcal{B'}$ of weight $w$. 
Now let $\La$ be a Brauer tree algebra associated to the graph
$$\xymatrix{
\circ \ar@{-}[r] & \circ \ar@{-}[r] & \cdots \ar@{-}[r] & \circ \ar@{-}[r] & \circ }$$
with $\ell$ vertices and no exceptional multiplicity. 
By \cite[Theorem 18]{ChuangMiyachi} in cases (1) and (2), and \cite[Theorem 2]{ChuangKessar} in case (3), 
the algebra $\mathcal{B'}$ is Morita equivalent to the wreath product $\Lambda \wr S_w$. 
Let us make a note on the restrictions on $k$ and $q$ made in cases (1) and (2). These 
result from the fact that Chuang and Miyachi invoke the representation theory of the finite general linear group $GL_n(q)$ in their 
proof of \cite[Theorem 18]{ChuangMiyachi}. 
In case (2) we must assume that $q \in k \backslash \{0,1 \}$ can be obtained by reducing a prime power modulo $p$, 
and therefore lies in $\mathbb{F}_p$,
but we can take any such element since by Dirichlet's theorem any element of $\mathbb{F}_p$ is congruent modulo $p$ to some prime.
In cases (1) and (2) we must also assume the field $k$ contains many roots of unity since a splitting field for $GL_n(q)$ 
contains such; the assumption that $k$ is algebraically closed guarantees this.

The above shows that the block $\mathcal{B}$ is derived equivalent to the wreath product $\Lambda \wr S_w$. 
By \cite[Corollary 4.2]{Xi2}, the representation dimension of $\mathcal{B}$ equals that of $\Lambda \wr S_w$, 
hence we compute the bounds for the latter. 
Note also that in each case the field $k$ is perfect, and $w!$ is invertible in $k$.

The Brauer tree algebra $\La$ is selfinjective, not semisimple and of finite representation type. Therefore its representation dimension is two, and so the upper bound in Theorem \ref{WreathProduct} gives
$$\repdim \left ( \Lambda \wr S_w \right ) \le 2w.$$
To compute the lower bound, note that by \cite[Remark 4.10]{Linckelmann}, the algebra $\La$ is periodic as a bimodule. This implies that the Krull dimension of the Hochschild cohomology ring $\HH^*( \La )$ is one, and that the $\HH^*( \La )$-module $\Ext_{\Lae}( \La, X)$ is Noetherian for every $\La$-$\La$-bimodule $X$ (here $\Lae$ denotes the enveloping algebra $\La \otimes_k \La^{\op}$ of $\La$). Denote the tensor algebra $\La^{\otimes w}$ by $A$, and its Jacobson radical by $\ra$. By \cite[Proposition 4.8 and preceding paragraph]{Linckelmann}, the Krull dimension of  $\HH^*( A )$ is $w$, and the $\HH^*( A )$-module $\Ext_{A^{\e}}( A, Y)$ is Noetherian for every $A$-$A$-bimodule $Y$. By \cite[Proposition 2.4]{ErdmannEtAl}, the latter is equivalent to the following: the Hochschild cohomology ring $\HH^*( A )$ is Noetherian, and $\Ext_A^*(A/ \ra, A/ \ra)$ is a finitely generated $\HH^*( A )$-module. Since $A / \ra$ is separable, it follows from the proof of \cite[Corollary 3.6]{Bergh} that the complexity of the $A$-module $A / \ra$ equals the Krull dimension of $\HH^*( A )$, which is $w$. Then by \cite[Theorem 3.2]{Bergh}, there is an inequality $\dim \left ( \stmod A \right ) \ge w-1$, hence
$$w+1 \le \dim \left ( \stmod A \right ) +2.$$
Since $A = \La^{\otimes w}$, the lower bound
$$w+1 \le \repdim \left ( \Lambda \wr S_w \right )$$
now follows from Theorem \ref{WreathProduct}. This completes the proof.
\end{proof}

Part (3) of the theorem shows that for the group algebra $kS_n$ of the $n^{th}$ symmetric group, the bounds hold for blocks of weight less than the characteristic $p$ of $k$. In particular, if $n < p^2$, then the weight of any block of $kS_n$ is less than $p$, and so the bounds hold. We record this in the following corollary, which generalizes \cite[Theorem 4.4]{BerghErdmann}.

\begin{corollary}\label{SymBlock}
Let $k$ be a perfect field of positive characteristic $p$, and $n$ an integer with $p \le n$. Furthermore, let $\mathcal{B}$ be a block of $kS_n$ of nonzero weight $w < p$.
Then the inequalities
$$w+1 \le \repdim \mathcal{B} \le 2w$$
hold. In particular, if $n < p^2$, then the inequalities hold for every block of nonzero weight $w$.
\end{corollary}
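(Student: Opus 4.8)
The plan is to deduce this corollary directly from case (3) of Theorem \ref{HeckeBlock}. First I would observe that when $q=1$ the Hecke algebra $\mathcal{H}_q(A_{n-1})$ is precisely the group algebra $kS_n$, and that the relevant integer $\ell$ governing the representation theory is then the characteristic $p$ of $k$. With these identifications, the hypotheses of the corollary---that $k$ is perfect of characteristic $p$, that $\mathcal{B}$ is a block of nonzero weight $w$, and that $w<p$---match exactly the hypotheses of case (3) of Theorem \ref{HeckeBlock}, since the condition $p=\ell>w$ there is literally the inequality $w<p$. Applying that theorem to $\mathcal{B}$ then yields $w+1 \le \repdim \mathcal{B} \le 2w$ at once, establishing the first assertion.

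For the concluding sentence, I would show that the hypothesis $n<p^2$ forces every block of $kS_n$ to have weight strictly less than $p$. Recall from the discussion preceding Theorem \ref{HeckeBlock} that the weight $w$ of a block satisfies $n = w\ell + |\gamma|$, where $\gamma$ is the $\ell$-core of the associated partition and $|\gamma| \ge 0$. Specializing to $\ell = p$ gives $wp \le n < p^2$, whence $w<p$. Thus for an arbitrary block of nonzero weight $w$ the condition $w<p$ holds automatically once $n<p^2$, and so the first part of the corollary applies verbatim to give the stated bounds.

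I do not anticipate any genuine obstacle here, since the corollary is essentially a specialization of Theorem \ref{HeckeBlock}. The only steps requiring attention are the bookkeeping identifications (namely $q=1 \Leftrightarrow \mathcal{H}_q(A_{n-1})=kS_n$ and $\ell=p$) and the elementary estimate $n<p^2 \Rightarrow w<p$ extracted from the weight formula. I would also remark in passing that the standing hypothesis $p \le n$ is precisely what guarantees the existence of a block of nonzero weight: a weight $w \ge 1$ already forces $n = wp + |\gamma| \ge p$, so without $p \le n$ the statement would be vacuous.
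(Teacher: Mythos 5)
Your proposal is correct and follows exactly the route the paper takes: the corollary is the specialization of case (3) of Theorem \ref{HeckeBlock} to $q=1$, $\ell=p$, together with the observation that $n<p^2$ forces $wp\le n<p^2$ and hence $w<p$ for every block. The additional remark that $p\le n$ is what makes the existence of a block of nonzero weight possible is a nice touch not spelled out in the paper, but the argument is otherwise identical.
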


\end{document}